\newcommand{\NN}{\mathbb N} 
\newcommand{\RR}{\mathbb R}
\newtheorem{theorem}{Theorem}
\newtheorem{corollary}[theorem]{Corollary}
\newtheorem{lemma}[theorem]{Lemma}
\begin{document}
\renewcommand{\baselinestretch}{1.05}
\title{A phase-field approximation\\ of the Willmore flow\\ with volume constraint} 

\author{Pierluigi Colli}
\address{Dipartimento di Matematica ``F.~Casorati'', Universit\`a di Pavia, Via Ferrata 1, I-27100~Pavia,~Italy}
\email{pierluigi.colli@unipv.it}
\author{Philippe Lauren\c cot}
\address{Institut de Math\'ematiques de Toulouse, CNRS UMR~5219, Universit\'e de Toulouse, F--31062 Toulouse Cedex 9, France} 
\email{laurenco@math.univ-toulouse.fr}
\keywords{phase-field approximation, gradient flow, well-posedness}
\subjclass{35K35, 35K55, 49J40}
\date{\today}

\begin{abstract}
The well-posedness of a phase-field approximation to the Willmore flow with volume constraint is established. The existence proof relies on the underlying gradient flow structure of the problem: the time discrete approximation is solved by a variational minimization principle.
\end{abstract}

\maketitle

%
%
\pagestyle{myheadings}
\newcommand\testopari{\sc{Pierluigi Colli and Philippe Lauren\c cot}}
\newcommand\testodispari{\sc{A phase-field approximation of the Willmore flow with volume constraint}}
\markboth{\testopari}{\testodispari}

\section{Introduction}\label{sec:int}


Let $\Omega$ be an open bounded subset of $\RR^N$, $1\le N\le 3$, with smooth boundary $\Gamma$. We are interested in the following evolution problem
\begin{eqnarray}
\partial_t v & - & \Delta \mu + (j+\sigma)''(v)\ \mu - \overline{(j+\sigma)''(v)\ \mu} = 0 \,, \quad (t,x)\in (0,\infty)\times\Omega\,, \label{a1} \\
\mu & = & - \Delta v + (j+\sigma)'(v)\,, \quad (t,x)\in (0,\infty)\times\Omega\,, \label{a2} \\
\nabla v\cdot \nu & = & \nabla \mu \cdot \nu = 0\,, \quad (t,x)\in (0,\infty)\times\Gamma\,, \label{a3} \\ 
v(0) & = & v_0\,, \quad x\in \Omega\,, \label{a4}
\end{eqnarray} 
where the nonlinearity $j+\sigma$ is a smooth double well-potential (for instance, $(j+\sigma)(r)=(r^2-1)^2/4$), $\nu$ is the outward unit normal vector field to $\Gamma$, and $\overline{f}$ denotes the spatial mean value of an integrable function $f$, namely,
$$
\overline{f} := \frac{1}{|\Omega|}\ \int_\Omega f(x)\ dx \;\;\mbox{ for }\;\;  f\in L^1(\Omega)\,.
$$
As one can easily realize from \eqref{a1} and \eqref{a3} by integrating over $\Omega$, the mean value of $v$ is conserved during the evolution, that is, $\overline{v}(t) = \overline{v_0}$. 

 The initial-boundary value problem \eqref{a1}-\eqref{a4} is a phase-field approximation of the Willmore flow (cf., in particular, \cite{DLW04, DLRW05}) which belongs to a class of geometric evolutions of hypersurfaces involving nonlinear functions of the principal curvatures of the hypersurface. Recall that the Willmore flow \emph{with volume constraint} for a family of (smooth) hypersurfaces $(\Sigma(t))_{t\ge 0}$ reads 
\begin{equation}\label{spip}
\mathcal{V} = - \Delta_\Sigma H - \frac{H}{2}\ (H^2-4K) + \lambda\,,
\end{equation}
where $\mathcal{V}$, $H$, $K$, and $\Delta_\Sigma$ denote the normal velocity of $\Sigma$, the sum of its principal curvatures (scalar mean curvature), the product of its principal curvatures (Gau\ss{} curvature), and the Laplace-Beltrami operator on $\Sigma$, respectively, while $\lambda$ is the Lagrange multiplier accounting for the volume conservation
$$
\int_\Sigma \mathcal{V}\ ds = 0\,.
$$
In addition, the Willmore flow is the $L^2$-gradient flow of the Willmore energy 
\begin{equation}\label{spirou}
\mathcal{E}_W(\Sigma) := \int_\Sigma H^2\ ds\,.
\end{equation}
Related geometric evolution flows involve more complicated energies such as the Helfrich energy and additional constraints, for instance on the area, and are found in the modelling of biological cell membranes. We refer, e.g., to \cite{BGN08,BMxx,DG91,DLW04,DLRW05,RS06} and the references therein for a more detailed description of these flows and their applications. To our knowledge, the energetic phase-field approximation \eqref{a1}-\eqref{a4} has been introduced in \cite{DLW04} in order to describe the deformation of a vesicle membrane under the elastic bending energy, with prescribed bulk volume and surface area, a related model without constraints being considered in \cite{LM00}.  Here, we restrict our analysis to the case of only the volume constraint, leaving the more complex case of two constraints as in \cite{DLW04} to a subsequent investigation. A nice feature of \eqref{a1}-\eqref{a4} already reported in \cite{DLW04} is that it inherits the gradient flow structure of the Willmore flow and it is actually a gradient flow in $L^2(\Omega)$ for the functional 
\begin{equation}\label{fantasio}
E(v) := \frac{1}{2} \int_\Omega \left[ - \Delta v(x) + (j+\sigma)'(v(x)) \right]^2\ dx\,,
\end{equation}
a property which is a cornerstone of the forthcoming analysis. The connection between the minimizers of the Willmore energy \eqref{spirou} and those of a suitably rescaled version of the energy \eqref{fantasio} of the stationary phase-field model has been investigated in \cite{DG91,Mo05,RS06}, and we refer to \cite{DLW04,DLRW05,Wa08} for the analysis of the relationship between the phase-field approach \eqref{a1}-\eqref{a4} and the Willmore flow, with or without volume and surface constraints. However, the well-posedness of the phase-field approximation does not seem to have been considered so far, and the aim of this note is to show the well-posedness of \eqref{a1}-\eqref{a4} under suitable assumptions on the data: more precisely, we assume that there is $C_0>0$ such that
\begin{eqnarray}
& & j\in\mathcal{C}^3(\RR) \;\mbox{ is a convex function with }\; j(0)=j'(0)=0\,, \label{a6} \\
& & \sigma\in\mathcal{C}^3(\RR) \;\mbox{ with }\; \sigma''\in L^\infty(\RR)\,, \label{a7} \\
& &  j+\sigma\ge 0 \;\mbox{ and }\; r\ (j+\sigma)'(r) \ge -C_0\,, \quad r\in\RR\,. \label{a8}
\end{eqnarray}

Next, owing to the already mentioned expected time invariance of the spatial mean-value of solutions to \eqref{a1}-\eqref{a4}, for $\alpha\in\RR$ we define the functional space
\begin{equation}
W := \left\{ w\in H^2(\Omega)\ :\ \nabla w\cdot \nu = 0 \;\;\mbox{ on }\;\; \Gamma \right\}\quad \mbox{ and its subset }  \quad W_\alpha := \left\{ w\in W\ :\ \overline{w} = \alpha \right\}\,. \label{a5}
\end{equation}

The paper is devoted to the proof of the following existence and uniqueness  result.

\begin{theorem}\label{th:a1}
Given $\alpha\in\RR$ and $v_0\in W_\alpha$, there is a unique solution $v$ to \eqref{a1}-\eqref{a4} satisfying
$$
v\in \mathcal{C}([0,T];L^2(\Omega))\cap L^\infty(0,T;W_\alpha) \;\;\mbox{ and }\;\; \mu  := -\Delta v + (j+\sigma)'(v) \in L^2(0,T;W)
$$
for all $T>0$. In addition, 
\begin{eqnarray}
& & t \longmapsto E\left( v(t) \right) := \frac{1}{2} || \mu(t)||_2^2 \;\;\mbox{ is a non-increasing function }\,, \label{a9} \\
& & \int_0^\infty \left\| -\Delta\mu(t) + (j+\sigma)''(v(t))\ \mu(t) - \overline{(j+\sigma)''(v)\ \mu}(t) \right\|_2^2\ dt \le 2 E(v_0)\,. \label{a10}
\end{eqnarray}
\end{theorem}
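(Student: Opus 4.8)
The plan is to establish existence via the gradient flow structure, using the minimizing movements (De Giorgi) scheme, and then handle uniqueness separately by an energy/contraction estimate. Fix $\alpha\in\RR$ and $v_0\in W_\alpha$. For a time step $\tau>0$, define inductively a sequence $(v^n_\tau)_{n\ge 0}$ by $v^0_\tau := v_0$ and, given $v^{n-1}_\tau$, choosing $v^n_\tau$ as a minimizer of the functional
\begin{equation*}
w \longmapsto E(w) + \frac{1}{2\tau}\ \|w - v^{n-1}_\tau\|_2^2
\end{equation*}
over the closed affine subspace $W_\alpha$. The first step is to show this minimization problem is solvable: $E$ is bounded below (indeed nonnegative) on $W_\alpha$, and one needs coercivity and lower semicontinuity. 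Coercivity in $H^2(\Omega)$ requires controlling $\|w\|_{H^2}$ by $E(w) + \|w\|_2^2$, which follows from writing $-\Delta w = \mu - (j+\sigma)'(w)$, testing with $w$ and using the sign conditions \eqref{a8} to absorb the nonlinear term, then invoking elliptic regularity for the Neumann Laplacian; the growth hypotheses \eqref{a6}--\eqref{a7} keep $(j+\sigma)'(w)$ under control. Weak lower semicontinuity of $E$ along an $H^2$-bounded minimizing sequence follows because $-\Delta w_k \rightharpoonup -\Delta w$ weakly in $L^2$ while $(j+\sigma)'(w_k) \to (j+\sigma)'(w)$ strongly (by compact Sobolev embedding, since $N\le 3$ gives $H^2\hookrightarrow\hookrightarrow L^\infty$ or at least $H^1\hookrightarrow\hookrightarrow L^p$), so $\mu_k\rightharpoonup\mu$ and the $L^2$ norm is weakly lsc.

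The second step is to extract the Euler--Lagrange equation for the minimizer $v^n_\tau$. Testing the first variation of $E$ against functions in $W_0$ (to stay in the affine space) and using that $\nabla\mu^n_\tau\cdot\nu=0$ appears naturally as a boundary term in $\delta E$, one obtains, with $\mu^n_\tau := -\Delta v^n_\tau + (j+\sigma)'(v^n_\tau)$, the discrete equation
\begin{equation*}
\frac{v^n_\tau - v^{n-1}_\tau}{\tau} - \Delta\mu^n_\tau + (j+\sigma)''(v^n_\tau)\ \mu^n_\tau - \overline{(j+\sigma)''(v^n_\tau)\ \mu^n_\tau} = 0,
\end{equation*}
the mean-value subtraction arising precisely as the Lagrange multiplier for the constraint $\overline{w}=\alpha$. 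This simultaneously yields $\mu^n_\tau\in W$ by elliptic regularity applied to the equation for $\mu^n_\tau$. The third step is the a priori estimates: the minimality inequality gives the discrete energy dissipation $E(v^n_\tau) + \frac{1}{2\tau}\|v^n_\tau - v^{n-1}_\tau\|_2^2 \le E(v^{n-1}_\tau)$, whence $E(v^n_\tau)\le E(v_0)$ for all $n$ and $\sum_n \|v^n_\tau - v^{n-1}_\tau\|_2^2 \le 2\tau E(v_0)$; combined with the coercivity estimate this bounds the piecewise-constant and piecewise-linear interpolants uniformly in $L^\infty(0,T;H^2)$. Testing the discrete equation with $\mu^n_\tau$ produces the $L^2(0,T;L^2)$ bound on the dissipation term $-\Delta\mu^n_\tau + (j+\sigma)''(v^n_\tau)\mu^n_\tau - \overline{\cdots}$, which also feeds back (via the equation for $\mu^n_\tau$) into an $L^2(0,T;H^2)$ bound on $\mu^n_\tau$ and an $L^2(0,T;L^2)$ bound on the discrete time derivative.

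The fourth step is the passage to the limit $\tau\to 0$: by Aubin--Lions compactness (using the $L^\infty(0,T;H^2)$ bound together with the bound on the discrete time derivative), the interpolants converge, up to a subsequence, strongly in $\mathcal{C}([0,T];L^2(\Omega))$ and weakly-$*$ in $L^\infty(0,T;H^2)$; strong $L^2$ convergence plus the pointwise bounds let one pass to the limit in the nonlinear terms $(j+\sigma)'(v^n_\tau)$ and $(j+\sigma)''(v^n_\tau)\mu^n_\tau$ (the latter as a product of a strongly convergent and a weakly convergent factor), yielding a solution $v$ with the claimed regularity; lower semicontinuity then gives \eqref{a9} and \eqref{a10} by passing to the limit in the discrete energy inequality. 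The main obstacle I anticipate is the coercivity/compactness interplay at the level of $H^2$: one must show that control of $E(w)=\frac12\|\mu\|_2^2$ genuinely controls $\|\Delta w\|_2$, which is delicate because $\mu$ and $-\Delta w$ differ by the nonlinear term $(j+\sigma)'(w)$ whose growth must be dominated — this is exactly where \eqref{a6}--\eqref{a8} are used, and getting the estimates uniform in $\tau$ (rather than merely for fixed $\tau$) requires care, since the nonlinearity is not globally Lipschitz. Finally, uniqueness is obtained by taking two solutions $v_1,v_2$ with the same data, writing the equation for the difference, testing with $(-\Delta)^{-1}$ of the difference (on the zero-mean subspace, using that means coincide), and using the Lipschitz bounds on $j'+\sigma'$ and $j''+\sigma''$ on the set where the $H^2$ norms are bounded, together with a Gronwall argument; the non-Lipschitz character of $j$ globally is harmless here because the solutions already live in $L^\infty(0,T;H^2)\subset L^\infty((0,T)\times\Omega)$.
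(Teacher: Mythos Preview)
Your existence argument follows the paper's almost exactly: both use the minimizing-movement scheme, establish coercivity of $E$ on $W_\alpha$ via the sign condition \eqref{a8} together with a monotonicity argument for the Neumann problem $-\Delta w + j'(w) = \mu - \sigma'(w)$, derive the Euler--Lagrange equation with the mean-value correction appearing as the Lagrange multiplier for the constraint $\overline w=\alpha$, obtain the discrete energy inequality, and pass to the limit. The only cosmetic difference is in the compactness step: you invoke Aubin--Lions, while the paper uses a refined Ascoli--Arzel\`a argument based on the discrete H\"older-type estimate $\|v^\tau(t_2)-v^\tau(t_1)\|_2^2 \le 2E(v_0)(\tau + t_2-t_1)$; either route works. (A small imprecision: the $L^2_t L^2_x$ bound on the dissipation term does not come from testing the discrete equation with $\mu^n_\tau$ --- that test yields the $H^1$ bound on $\mu^n_\tau$ --- but simply from the fact that the dissipation equals $-(v^n_\tau-v^{n-1}_\tau)/\tau$ by the Euler--Lagrange equation, whose $L^2$ norm is already controlled by the energy estimate.)

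For uniqueness the approaches diverge. You propose testing the equation for the difference with $(-\Delta)^{-1}(v_1-v_2)$, in the Cahn--Hilliard spirit. This can be closed, but not as directly as your sketch suggests: the term $(j+\sigma)''(v_2)(\mu_1-\mu_2)$ tested against $\phi := (-\Delta)^{-1}(v_1-v_2)$ brings in $\|\mu_1-\mu_2\|_2$, which is not dominated by the good term $\|\nabla(v_1-v_2)\|_2^2$; one has to write $\mu_1-\mu_2=-\Delta(v_1-v_2)+R$, integrate by parts once more, and use $\nabla v_2\in L^3$ (from $v_2\in H^2$) to close in the $H^{-1}$ norm. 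The paper takes a shorter route: it tests directly with $v_1-v_2$ in $L^2$ and uses the identity $\Delta(v_1-v_2) = (j+\sigma)'(v_1)-(j+\sigma)'(v_2) - (\mu_1-\mu_2)$, which produces a \emph{damping} term $-\|\mu_1-\mu_2\|_2^2$ on the right-hand side. That term absorbs every occurrence of $\|\mu_1-\mu_2\|_2$ via Young's inequality, and the estimate closes by Gronwall with weight $1+\|\mu_1(t)\|_\infty \in L^2(0,T)$. The paper's test function is the more economical choice here.
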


Owing to  the above mentioned  gradient flow structure, a classical approach to existence is to use an implicit time scheme and solve a minimization problem at each step, see, e.g., \cite{AGS08} or \cite[Chap.~8]{Vi03}.  The existence of a minimizer to the corresponding stationary problem is discussed in Section~\ref{sec:ex}, and Subsection~\ref{sec:tef} also collects some properties of the auxiliary variable $\mu$. The time discretization is next implemented in Subsection~\ref{sec:td} and convergence of the time discrete scheme is proved in Subsection~\ref{sec:cv}  with the help of monotonicity and compactness properties. Finally, uniqueness is shown in Section~\ref{sec:un} by a standard contraction argument.

\section{Existence}\label{sec:ex}

\subsection{The energy functional}\label{sec:tef}

Following \cite{DLW04}, we define the functional $E$ on $W$ by
\begin{equation}
E(w) := \frac{1}{2} \int_\Omega \left[ - \Delta w(x) + (j+\sigma)'(w(x)) \right]^2\ dx\,. \label{b0}
\end{equation}	
Observe that $E$ is well defined for any $w\in W$ thanks to the continuous embedding of $H^2(\Omega)$ in $L^\infty(\Omega)$, \eqref{a6}, and \eqref{a7}. Indeed, for $w\in W$, we have $w\in L^\infty(\Omega)$ and 
$$
\left| (j+\sigma)'(w) \right| \le \int_0^w j''(r)\ dr + |\sigma'(0)| + \|\sigma''\|_\infty |w| \le |\sigma'(0)| + \left( \sup_{[-\|w\|_\infty,\|w\|_\infty]}{\left\{ j'' \right\}} + \|\sigma''\|_\infty \right)\ |w|\,.
$$
Consequently, $(j+\sigma)'(w)\in L^2(\Omega)$ and $E$ is well defined. We gather some properties of $E$ in the next lemma.

\begin{lemma}\label{le:b0}
Given $\alpha\in\RR$, there is $C_1(\alpha)>0$ depending only on $\Omega$, $\sigma$, $C_0$ in \eqref{a8}, and $\alpha$ such that
\begin{equation}\label{b1}
\|w\|_{H^2} + \|j'(w)\|_2 \le C_1(\alpha)\ \left( 1 + \sqrt{E(w)} \right)\, \quad \mbox{for all} \quad w\in W_\alpha\,.
\end{equation}
\end{lemma}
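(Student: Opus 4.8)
The plan is to estimate the $H^2$ norm of $w \in W_\alpha$ and the $L^2$ norm of $j'(w)$ by testing the identity $\mu := -\Delta w + (j+\sigma)'(w)$ against suitable functions and exploiting the structural assumptions \eqref{a6}--\eqref{a8}. First I would write $\|\mu\|_2 = \sqrt{2E(w)}$ for brevity. Multiplying the identity for $\mu$ by $w - \alpha$ and integrating by parts (the boundary term vanishes since $\nabla w \cdot \nu = 0$ on $\Gamma$) gives
$$
\int_\Omega |\nabla w|^2\,dx + \int_\Omega (j+\sigma)'(w)\,(w-\alpha)\,dx = \int_\Omega \mu\,(w-\alpha)\,dx\,.
$$
By \eqref{a8} we have $r(j+\sigma)'(r) \ge -C_0$, and since $\sigma'' \in L^\infty$ the term $\alpha(j+\sigma)'(w)$ is controlled by $C(1 + |w|)$; together with the Cauchy--Schwarz and Poincaré--Wirtinger inequalities (using $\overline{w} = \alpha$) this yields a bound on $\|\nabla w\|_2$, and hence on $\|w\|_{H^1}$, of the form $C_1(\alpha)(1 + \sqrt{E(w)})$.

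Next I would recover $\|\Delta w\|_2$ and $\|j'(w)\|_2$. Testing the identity for $\mu$ against $\Delta w$ (or equivalently rewriting $-\Delta w = \mu - (j+\sigma)'(w)$ and taking $L^2$ norms) gives, after an integration by parts that produces $\int_\Omega j''(w)|\nabla w|^2\,dx \ge 0$ by convexity \eqref{a6},
$$
\int_\Omega |\Delta w|^2\,dx + \int_\Omega j''(w)\,|\nabla w|^2\,dx = \int_\Omega \mu\,(-\Delta w)\,dx - \int_\Omega \sigma''(w)\,|\nabla w|^2\,dx\,,
$$
so $\|\Delta w\|_2^2 \le \|\mu\|_2 \|\Delta w\|_2 + \|\sigma''\|_\infty \|\nabla w\|_2^2$, which with the $H^1$ bound already obtained and Young's inequality gives $\|\Delta w\|_2 \le C_1(\alpha)(1 + \sqrt{E(w)})$. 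Since $w \in W$ satisfies the Neumann condition, elliptic regularity gives $\|w\|_{H^2} \le C(\|\Delta w\|_2 + \|w\|_2)$, completing the $H^2$ estimate. Finally, from $\|(j+\sigma)'(w)\|_2 \le \|\mu\|_2 + \|\Delta w\|_2$ and $\|j'(w)\|_2 \le \|(j+\sigma)'(w)\|_2 + \|\sigma'(w)\|_2$ with $|\sigma'(w)| \le |\sigma'(0)| + \|\sigma''\|_\infty |w|$, one obtains the bound on $\|j'(w)\|_2$.

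The main obstacle I anticipate is handling the terms involving $(j+\sigma)'(w)$ and $j''(w)$ without an a priori $L^\infty$ bound on $w$: the naive estimates use the embedding $H^2(\Omega) \hookrightarrow L^\infty(\Omega)$ (valid for $N \le 3$), but one must be careful that the resulting constant depends only on $\Omega$, $\sigma$, $C_0$, and $\alpha$, and not circularly on $\|w\|_{H^2}$ itself. The resolution is to keep all such terms absorbed into either the sign-definite quantities $r(j+\sigma)'(r) \ge -C_0$ and $\int_\Omega j''(w)|\nabla w|^2 \ge 0$, or into factors controlled by $\|\sigma''\|_\infty$ and the already-established lower-order norms, so that Young's inequality closes the estimate; the convexity of $j$ and the one-sided bound \eqref{a8} are exactly what make this possible, and the pointwise growth estimate on $(j+\sigma)'$ recorded just before the lemma is used only to guarantee that all integrals are finite.
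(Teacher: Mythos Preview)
Your overall strategy matches the paper's: test the identity $\mu=-\Delta w+(j+\sigma)'(w)$ first against $w$ (or $w-\alpha$) to control $\|w\|_{H^1}$, then against $-\Delta w$ (this is exactly the ``classical monotonicity argument'' invoked in the paper) to control $\|\Delta w\|_2$ and $\|j'(w)\|_2$.

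There is, however, a genuine slip in your first step. You test against $w-\alpha$ and then claim that, ``since $\sigma''\in L^\infty$, the term $\alpha(j+\sigma)'(w)$ is controlled by $C(1+|w|)$.'' This is false: only $\sigma'$ has at most linear growth, while $j'$ carries no growth restriction (e.g.\ $j(r)=\cosh r -1$ satisfies \eqref{a6}). Thus the cross term $-\alpha\int_\Omega j'(w)\,dx$ is not bounded by a quantity depending only on $\|w\|_2$, and your stated argument is circular in the way you yourself warn against in the last paragraph.

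The fix is easy. Either do what the paper does and test against $w$ rather than $w-\alpha$: then the sign condition $w(j+\sigma)'(w)\ge -C_0$ applies directly and $\alpha$ enters only through $\|w\|_2\le |\alpha|\,|\Omega|^{1/2}+C_2\|\nabla w\|_2$. Or keep your test function $w-\alpha$ but observe that, thanks to the Neumann boundary condition,
\[
\int_\Omega (j+\sigma)'(w)\,dx \;=\; \int_\Omega \bigl(\mu+\Delta w\bigr)\,dx \;=\; \int_\Omega \mu\,dx\,,
\]
so that $\bigl|\alpha\int_\Omega (j+\sigma)'(w)\,dx\bigr|\le |\alpha|\,|\Omega|^{1/2}\|\mu\|_2$, which is harmless. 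With either correction, the rest of your argument goes through and is essentially the paper's proof.
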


\begin{proof} Consider $w\in W_\alpha$ and put $\mu:= -\Delta w + (j+\sigma)'(w)$. Then $\mu\in L^2(\Omega)$ with $\|\mu\|_2^2=2E(w)$, and we infer from \eqref{a8} that 
$$
\int_\Omega w\ \mu\ dx = \|\nabla w\|_2^2 + \int_\Omega w\ (j+\sigma)'(w)\ dx \ge \|\nabla w\|_2^2 -C_0\ |\Omega|\,.
$$
Combining the above inequality with the Poincar\'e-Wirtinger inequality
\begin{equation}\label{b2}
\| w - \overline{w} \|_2 \le C_2\ \|\nabla w\|_2\,,
\end{equation}
we obtain
\begin{eqnarray*}
\|\nabla w\|_2^2 & \le & C_0 |\Omega| + \int_\Omega w\ \mu\ dx  \le C_0 |\Omega| + \| w\|_2 \|\mu\|_2 \\
& \le & C_0 |\Omega| + \sqrt{2 E(w)}\ \left( \alpha |\Omega|^{1/2} + \|w-\alpha\|_2 \right) \le  C_0 |\Omega| + \sqrt{2 E(w)}\ \left( \alpha |\Omega|^{1/2} + C_2\ \|\nabla w\|_2 \right) \\ 
& \le &  C_0 |\Omega| + \alpha |\Omega|^{1/2}\ \sqrt{2 E(w)} + \frac{1}{2}\ \|\nabla w\|_2^2 + C_2^2\ E(w)\,,
\end{eqnarray*}
hence $\|\nabla w\|_2^2 \le C(\alpha)\ (1+E(w))$. Using again \eqref{b2}, we conclude that 
\begin{equation}\label{b3}
\| w \|_{H^1} \le C(\alpha)\ \left( 1 + \sqrt{E(w)} \right)\,.
\end{equation}
Now, $w\in W$ solves $-\Delta w + j'(w) = \mu - \sigma'(w)$ and, owing to the monotonicity of $j'$, a classical monotonicity argument shows that
$$
\|\Delta w\|_2 + \|j'(w)\|_2 \le \| \mu - \sigma'(w)\|_2\,.
$$ 
It then follows from \eqref{a7} that
$$
\|\Delta w\|_2 + \|j'(w)\|_2 \le \| \mu \|_2 + |\sigma'(0)| |\Omega|^{1/2} + \|\sigma''\|_\infty\ \|w\|_2\,,
$$
which, together with \eqref{b3}  and $\| \mu \|_2 = \sqrt{2E(w)}$, gives \eqref{b1}. 
\end{proof}

\medskip

Next, given $\tau>0$ and $f\in L^2(\Omega)$, we define the functional $F_{\tau,f}$ on $W$ by
\begin{equation}\label{b4}
F_{\tau,f}(w) := \frac{1}{2}\ \| w-f\|_2^2 + \tau\ E(w)\,, \quad w\in W\,.
\end{equation}

\begin{lemma}\label{le:b1}
Given $\alpha\in\RR$, the functional $F_{\tau,f}$ has (at least) a minimizer in $W_\alpha$.
\end{lemma}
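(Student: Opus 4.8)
The plan is to use the direct method of the calculus of variations. Let $m := \inf\{F_{\tau,f}(w) : w \in W_\alpha\}$. First I would note that $m$ is finite and nonnegative: indeed $F_{\tau,f}(w) \ge 0$ for every $w$, and $W_\alpha$ is nonempty (it contains the constant function $\alpha$), so $m \le F_{\tau,f}(\alpha) < \infty$. Pick a minimizing sequence $(w_k)_{k\ge 1}$ in $W_\alpha$ with $F_{\tau,f}(w_k) \to m$; in particular $\sup_k E(w_k) < \infty$. By Lemma~\ref{le:b0}, this gives a uniform bound on $\|w_k\|_{H^2}$ (and on $\|j'(w_k)\|_2$, which will be useful for passing to the limit in the nonlinear term).

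The next step is to extract a limit. Since $(w_k)$ is bounded in $H^2(\Omega)$ and $H^2(\Omega)$ is reflexive, after extracting a subsequence (not relabelled) we have $w_k \rightharpoonup w$ weakly in $H^2(\Omega)$ for some $w \in H^2(\Omega)$; by the compact embedding $H^2(\Omega) \hookrightarrow H^1(\Omega) \hookrightarrow L^2(\Omega)$ and the continuity of the trace, we also get $w_k \to w$ strongly in $L^2(\Omega)$ and, since $N\le 3$, $H^2(\Omega)\hookrightarrow C(\overline\Omega)$ with $w_k\to w$ uniformly (at least strongly in $C(\overline\Omega)$ along a further subsequence, or via Rellich in a Hölder space). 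The limit $w$ lies in $W$ because the boundary condition $\nabla w_k\cdot\nu=0$ is preserved: $w\mapsto \nabla w\cdot\nu|_\Gamma$ is continuous from $H^2(\Omega)$ to $L^2(\Gamma)$ and (being linear) weakly continuous, so $\nabla w\cdot\nu=0$ on $\Gamma$. Likewise $\overline{w_k}=\alpha$ passes to the limit since $w\mapsto\overline w$ is a bounded linear functional, hence $w\in W_\alpha$.

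It remains to prove lower semicontinuity: $F_{\tau,f}(w) \le \liminf_k F_{\tau,f}(w_k) = m$, which forces $F_{\tau,f}(w)=m$ and concludes. The quadratic term $\frac12\|w-f\|_2^2$ is actually continuous along the sequence by the strong $L^2$ convergence, so everything reduces to showing $E(w) \le \liminf_k E(w_k)$. Write $\mu_k := -\Delta w_k + (j+\sigma)'(w_k)$, so $2E(w_k)=\|\mu_k\|_2^2$. The uniform convergence $w_k\to w$ combined with the continuity of $(j+\sigma)'$ (from \eqref{a6}, \eqref{a7}) gives $(j+\sigma)'(w_k)\to (j+\sigma)'(w)$ in $L^2(\Omega)$ (dominated convergence, the integrands being uniformly bounded). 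On the other hand $\Delta w_k \rightharpoonup \Delta w$ weakly in $L^2(\Omega)$. Hence $\mu_k \rightharpoonup \mu := -\Delta w + (j+\sigma)'(w)$ weakly in $L^2(\Omega)$, and the norm $\|\cdot\|_2$ is weakly lower semicontinuous, so $\|\mu\|_2^2 \le \liminf_k \|\mu_k\|_2^2$, i.e. $E(w)\le\liminf_k E(w_k)$.

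The only delicate point is the treatment of the nonlinear term $(j+\sigma)'(w_k)$: one must make sure the minimizing sequence is bounded in a space that embeds compactly enough to guarantee pointwise (indeed uniform) convergence of $w_k$, so that $(j+\sigma)'(w_k)$ converges strongly in $L^2$ rather than merely weakly — otherwise one cannot identify the weak limit of $\mu_k$. Here this is exactly what Lemma~\ref{le:b0} provides, since the $H^2$-bound and $N\le 3$ yield the embedding into $C(\overline\Omega)$; in dimensions $N\ge 4$ this step would be the genuine obstacle. Everything else is the routine machinery of the direct method.
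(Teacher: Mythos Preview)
Your argument is correct and follows essentially the same route as the paper: the direct method, with Lemma~\ref{le:b0} supplying the $H^2$-bound on a minimizing sequence, the compact embedding $H^2(\Omega)\hookrightarrow \mathcal{C}(\bar\Omega)$ (valid since $N\le 3$) to handle the nonlinear term $(j+\sigma)'(w_k)$, and weak lower semicontinuity of the $L^2$-norm for $E$. If anything, you are slightly more explicit than the paper in checking that the limit inherits the Neumann boundary condition and the mean-value constraint.
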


\begin{proof} We set $F:=F_{\tau,f}$ to simplify notations. Since $E$ is nonnegative, $F$ is obviously nonnegative and there is a minimizing sequence $(w_n)_{n\ge 1}$ in $W_\alpha$ such that 
\begin{equation}\label{b5}
m_\alpha := \inf_{w\in W_\alpha}{\{ F(w) \}} \le F(w_n) \le m_\alpha + \frac{1}{n}\,, \quad n\ge 1\,.
\end{equation}
Since $F(w_n)\ge \tau\ E(w_n)$, we readily infer from \eqref{b5} that $(E(w_n))_{n\ge 1}$ is bounded, a property which in turn implies that $(w_n)_{n\ge 1}$ is bounded in $H^2(\Omega)$ by Lemma~\ref{le:b0}. Owing to the compactness of the embedding of $H^2(\Omega)$ in $\mathcal{C}(\bar{\Omega})$, we deduce that there are $w\in H^2(\Omega)$ and a subsequence of $(w_n)_{n\ge 1}$ (not relabeled) such that 
\begin{equation}\label{b6}
w_n \longrightarrow w \;\mbox{ in }\; \mathcal{C}(\bar{\Omega}) \;\mbox{ and }\; w_n \rightharpoonup w \;\mbox{ in }\; H^2(\Omega)\,.
\end{equation}
Clearly, the first convergence implies that $\left( (j+\sigma)'(w_n)\right)_{n\ge 1}$ converges towards $(j+\sigma)'(w)$ in $L^2(\Omega)$ and therefore 
$$
F(w) \le \liminf_{n\to\infty} F(w_n) \le m_\alpha\,.
$$
As $w$ obviously belongs to $W_\alpha$ by \eqref{b6}, we also have $F(w)\ge m_\alpha$ and $w$ is a minimizer of $F$ in $W_\alpha$. 
\end{proof}

\medskip

We next derive an energy inequality and the Euler-Lagrange equation satisfied by minimizers of $F_{\tau,f}$ in $W_\alpha$ when $\overline{f}=\alpha$.

\begin{lemma}\label{le:b2}
Consider $\alpha\in\RR$ and a minimizer $w$ of $F_{\tau,f}$ in $W_\alpha$. Assume further that $\overline{f}=\alpha$. Then $\mu:= -\Delta w + (j+\sigma)'(w)$ belongs to $W$, 
\begin{equation}\label{b7}
\int_\Omega \left[ \frac{w-f}{\tau} - \Delta\mu + (j+\sigma)''(w)\ \mu - \overline{(j+\sigma)''(w)\ \mu} \right] \psi\ dx = 0 \quad \mbox{for all} \quad \psi\in W\,,
\end{equation}
and
\begin{equation}\label{b8}
\left\| -\Delta \mu + (j+\sigma)''(w)\ \mu - \overline{(j+\sigma)''(w)\ \mu} \right\|_2 \le \frac{\|w - f\|_2}{\tau}\,.
\end{equation}
\end{lemma}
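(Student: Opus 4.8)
The plan is to compute the first variation of $F_{\tau,f}$ at the minimizer $w$ along admissible perturbations and to combine this with the convexity of $j$ to upgrade regularity. First I would observe that since $w$ minimizes $F_{\tau,f}$ over $W_\alpha$, any perturbation $w+\varepsilon\psi$ with $\psi\in W$ and $\overline{\psi}=0$ stays in $W_\alpha$, so the directional derivative vanishes. Writing $\mu=-\Delta w+(j+\sigma)'(w)$ and differentiating $E$, the Gâteaux derivative of $w\mapsto\tfrac12\|\mu\|_2^2$ in the direction $\psi$ is $\int_\Omega \mu\,[-\Delta\psi+(j+\sigma)''(w)\psi]\,dx$; integrating the Laplacian by parts twice (legitimately, since $\mu,\psi\in W$ and both have zero normal derivative on $\Gamma$) this equals $\int_\Omega [-\Delta\mu+(j+\sigma)''(w)\mu]\,\psi\,dx$. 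Together with the derivative $\int_\Omega(w-f)\psi\,dx$ of the first term, stationarity gives
\begin{equation*}
\int_\Omega\left[\frac{w-f}{\tau}-\Delta\mu+(j+\sigma)''(w)\,\mu\right]\psi\,dx=0
\end{equation*}
for all $\psi\in W$ with $\overline{\psi}=0$. Removing the mean-zero restriction by testing with $\psi-\overline{\psi}$ and using $\overline{f}=\alpha=\overline{w}$ (so that $\overline{w-f}=0$) introduces exactly the mean-value correction term and yields \eqref{b7} for all $\psi\in W$.

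The one genuinely delicate point is justifying that $\mu\in W$ in the first place and that the above variations make sense: a priori $\mu$ is only known to lie in $L^2(\Omega)$, so $\Delta\psi$ paired against $\mu$ and the double integration by parts must be handled with care. I would argue as follows. From \eqref{b7}, viewed first only for smooth compactly supported $\psi$, $\mu$ satisfies $-\Delta\mu = (w-f)/\tau - (j+\sigma)''(w)\mu + \overline{(j+\sigma)''(w)\mu}$ in the sense of distributions; the right-hand side is in $L^2(\Omega)$ because $w\in H^2(\Omega)\hookrightarrow L^\infty(\Omega)$ makes $(j+\sigma)''(w)$ bounded (by \eqref{a6}–\eqref{a7}), so elliptic regularity gives $\mu\in H^2(\Omega)$, and then testing the equation against $\psi\in W$ and integrating by parts recovers the natural boundary condition $\nabla\mu\cdot\nu=0$, i.e. $\mu\in W$. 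To make this rigorous and to secure the existence of the one-sided derivatives of $E$, the cleanest route is to work with the perturbation $w_\varepsilon$ obtained as a minimizer of $F_{\tau,f}$ constrained further, or simply to use that $w$ solves the semilinear elliptic problem $-\Delta w + j'(w) = \mu-\sigma'(w)$ and exploit the monotonicity of $j'$ (exactly as in the proof of Lemma~\ref{le:b0}) to control $\Delta w$, hence $w\in H^2$, hence $\mu$ is at least $L^2$ and the bootstrap above applies. I expect this regularity bootstrap, together with the careful bookkeeping of the boundary terms in the double integration by parts, to be the main obstacle.

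Finally, inequality \eqref{b8} is immediate from \eqref{b7}: taking
\begin{equation*}
\psi = -\Delta\mu + (j+\sigma)''(w)\,\mu - \overline{(j+\sigma)''(w)\,\mu} \in L^2(\Omega),
\end{equation*}
which has zero mean and, since $\mu\in W$ with $(j+\sigma)''(w)\in L^\infty$ bounded, actually lies in $W$ so is an admissible test function, \eqref{b7} becomes
\begin{equation*}
\|\psi\|_2^2 = -\frac{1}{\tau}\int_\Omega(w-f)\,\psi\,dx \le \frac{1}{\tau}\,\|w-f\|_2\,\|\psi\|_2
\end{equation*}
by Cauchy–Schwarz, and dividing by $\|\psi\|_2$ gives \eqref{b8} (the case $\psi=0$ being trivial).
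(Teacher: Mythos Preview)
Your derivation of the Euler--Lagrange identity and the regularity of $\mu$ follows the paper's route, though your exposition is slightly circular: you first write the integrated-by-parts form (which needs $\mu\in W$) and only afterwards address why $\mu\in W$. The paper keeps the weak form $\int_\Omega\mu\,(-\Delta\psi)\,dx$ for $\psi\in W$ throughout (its (b10)), and deduces $\mu\in W$ directly from that variational identity via elliptic regularity for the Neumann Laplacian; this ordering is cleaner and avoids the circularity you flag yourself.

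For \eqref{b8}, however, your argument has a genuine gap. You claim that
\[
\psi=-\Delta\mu+(j+\sigma)''(w)\,\mu-\overline{(j+\sigma)''(w)\,\mu}
\]
lies in $W$, but this is not justified and is in general false: $W\subset H^2(\Omega)$, while $\Delta\mu$ is only in $L^2(\Omega)$, and from \eqref{b7} one sees that $\psi=-(w-f)/\tau$, which belongs to $W$ only if $f$ does --- yet the lemma assumes merely $f\in L^2(\Omega)$. The paper circumvents this by an elliptic regularization: it tests \eqref{b7} with the solution $\varphi_\eta\in W_0$ of $\varphi_\eta-\eta\Delta\varphi_\eta=\psi$, obtains $\|\varphi_\eta\|_2\le\|w-f\|_2/\tau$, and lets $\eta\to0$. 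Your idea can be salvaged more simply: since the bracket in \eqref{b7} belongs to $L^2(\Omega)$ and $W$ is dense in $L^2(\Omega)$ (it contains $C_c^\infty(\Omega)$), \eqref{b7} forces the bracket to vanish a.e., so \eqref{b8} is in fact an \emph{equality}. Either way, the step ``$\psi\in W$'' as written does not stand.
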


\begin{proof}
We set 
$$
\mu := -\Delta w + (j+\sigma)'(w)\,.
$$
Consider $\varepsilon\in (0,1)$ and $\varphi\in W_0$. As $w+\varepsilon\varphi$ belongs to $W_\alpha$, we have $F_{\tau,f}(w)\le F_{\tau,f}(w+\varepsilon\varphi)$ from which we deduce by classical arguments (after passing to the limit as $\varepsilon\to 0$) that
$$
\frac{1}{\tau}\ \int_\Omega (w-f)\ \varphi\ dx + \int_\Omega \mu\ \left( -\Delta\varphi + (j+\sigma)''(w)\ \varphi \right)\ dx \ge 0\,.
$$
Since the above inequality is valid for $\varphi$ and $-\varphi$, we actually have the identity 
\begin{equation}\label{b9}
\frac{1}{\tau}\ \int_\Omega (w-f)\ \varphi\ dx + \int_\Omega \mu\ \left( -\Delta\varphi + (j+\sigma)''(w)\ \varphi \right)\ dx = 0
\end{equation}
for all $\varphi\in W_0$. Now, if $\psi\in W$, the function $\psi-\overline{\psi}$ belongs to $W_0$ and it follows from \eqref{b9} that
\begin{equation}\label{b10}
\frac{1}{\tau}\ \int_\Omega (w-f)\ \psi\ dx + \int_\Omega \mu\ \left( -\Delta\psi + (j+\sigma)''(w)\ \psi \right)\ dx = \overline{(j+\sigma)''(w)\ \mu}\ \int_\Omega \psi\ dx\,,
\end{equation}
since $w$ and $f$ have the same mean value $\alpha$.  Since $\mu \in L^2(\Omega)$ solves the variational equality \eqref{b10} for all test functions $\psi\in W$, we deduce that $\mu\in W$ and satisfies \eqref{b7}.

Next, for $\eta\in (0,1)$, let $\varphi_\eta$ be the unique solution in $W_0$ to 
$$
\varphi_\eta - \eta\ \Delta\varphi_\eta = -\Delta \mu + (j+\sigma)''(w)\ \mu - \overline{(j+\sigma)''(w)\ \mu} \;\;\mbox{ in }\;\; \Omega\,,
$$
the right-hand side of the previous equation being in $L^2(\Omega)$ since $\mu\in W$ and $w\in H^2(\Omega)$ is bounded. Also, the right-hand side of the previous equation has a zero mean-value so that $\varphi_\eta\in W_0$. Taking $\psi=\varphi_\eta$ in \eqref{b7}, we realize that
$$
\int_\Omega \left[ \frac{w-f}{\tau} + \varphi_\eta - \eta\ \Delta\varphi_\eta \right] \varphi_\eta\ dx = 0\,,
$$ 
from which we deduce that
$$
\|\varphi_\eta\|_2^2  \le  \|\varphi_\eta\|_2^2 + \eta\ \|\nabla\varphi_\eta\|_2^2 = - \int_\Omega \frac{w-f}{\tau}\ \varphi_\eta\ dx \le \frac{\|w-f\|_2}{\tau}\ \|\varphi_\eta\|_2 \,,
$$
whence
$$
\|\varphi_\eta\|_2 \le \frac{\|w-f\|_2}{\tau}\,.
$$
Since $(\varphi_\eta)_\eta$ converges toward $(-\Delta \mu + (j+\sigma)''(w)\ \mu - \overline{(j+\sigma)''(w)\ \mu} )$ in $L^2(\Omega)$ as $\eta\to 0$, \eqref{b8} follows from the above inequality.
\end{proof}

\subsection{Time discretization}\label{sec:td}

Let $\alpha\in\RR$ and take an initial  condition $v_0\in W_\alpha$. We consider a positive time step $\tau\in (0,1)$ and define a sequence $(v_n^\tau)_{n\ge 1}$ inductively as follows:
\begin{eqnarray}
& & v_0^\tau := v_0\,, \label{c1} \\
& & v_{n+1}^\tau \;\mbox{ is a minimizer of }\; F_{\tau,v_n^\tau} \;\mbox{ in }\; W_\alpha\,, \quad n\ge 0\,, \label{c2}
\end{eqnarray}
the functional $F_{\tau,v_n^\tau}$ being defined in \eqref{b4}. Setting
\begin{equation}\label{c3b}
\mu_n^\tau := - \Delta v_n^\tau + (j+\sigma)'(v_n^\tau) \;\;\mbox{ and }\;\; M_n^\tau:= \overline{(j+\sigma)''(v_n^\tau)\ \mu_n^\tau}\,,
\end{equation}
we define three piecewise constant time-dependent functions $v^\tau$, $\mu^\tau$, and $M^\tau$ by 
\begin{equation}\label{c3}
\left( v^\tau(t) , \mu^\tau(t) , M^\tau(t) \right) := \left( v_n^\tau , \mu_n^\tau , M_n^\tau \right) \;\;\mbox{ for }\;\; t\in [n\tau,(n+1)\tau) \;\;\mbox{ and }\;\; n\ge 0\,.
\end{equation}

\begin{lemma}\label{le:c1} For $\tau\in (0,1)$, $t_1\ge 0$, and $t_2>t_1$, we have
\begin{eqnarray}
& & E\left( v^\tau(t_2) \right) \le E\left( v^\tau(t_1) \right) \le E(v_0)\,, \label{c4} \\
& & \|v^\tau(t_2) -v^\tau(t_1)\|_2^2 \le 2E(v_0)\ (\tau+t_2-t_1)\,, \label{c5} \\
& & \int_\tau^\infty \left\| -\Delta\mu^\tau(t) + (j+\sigma)''(v^\tau(t))\ \mu^\tau(t) - M^\tau(t) \right\|_2^2\ dt \le 2 E(v_0)\,. \label{c6}
\end{eqnarray}
\end{lemma}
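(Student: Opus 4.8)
The plan is to exploit the minimizing property \eqref{c2} to obtain a discrete energy–dissipation estimate, then upgrade it to the three stated inequalities. First I would use that $v_{n+1}^\tau$ minimizes $F_{\tau,v_n^\tau}$ over $W_\alpha$ while $v_n^\tau$ itself lies in $W_\alpha$; comparing the values of the functional at these two points gives
$$
\frac{1}{2}\,\|v_{n+1}^\tau - v_n^\tau\|_2^2 + \tau\,E(v_{n+1}^\tau) \;=\; F_{\tau,v_n^\tau}(v_{n+1}^\tau) \;\le\; F_{\tau,v_n^\tau}(v_n^\tau) \;=\; \tau\,E(v_n^\tau).
$$
This single inequality is the engine of the whole lemma. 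Dropping the first term yields $E(v_{n+1}^\tau)\le E(v_n^\tau)$, and iterating from $n=0$ gives $E(v_n^\tau)\le E(v_0)$ for all $n$; recalling the definition \eqref{c3} of the piecewise constant interpolant $v^\tau$, and that $t\mapsto E(v^\tau(t))$ is constant on each interval $[n\tau,(n+1)\tau)$ and nonincreasing across them, this is exactly \eqref{c4}.

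For \eqref{c5}, I would keep both terms: summing the energy–dissipation inequality over $n$ from $n_1$ to $n_2-1$ telescopes the energy contributions and gives
$$
\frac{1}{2}\sum_{n=n_1}^{n_2-1}\|v_{n+1}^\tau - v_n^\tau\|_2^2 \;\le\; \tau\bigl(E(v_{n_1}^\tau) - E(v_{n_2}^\tau)\bigr) \;\le\; \tau\,E(v_0).
$$
To pass from this to a bound on $\|v^\tau(t_2)-v^\tau(t_1)\|_2$ I would write $v^\tau(t_2)-v^\tau(t_1) = v_{n_2}^\tau - v_{n_1}^\tau$ (with $n_i = \lfloor t_i/\tau\rfloor$) as a telescoping sum of consecutive increments, apply the triangle inequality and then Cauchy–Schwarz in the discrete sum: $\|v_{n_2}^\tau - v_{n_1}^\tau\|_2^2 \le (n_2-n_1)\sum_{n=n_1}^{n_2-1}\|v_{n+1}^\tau-v_n^\tau\|_2^2 \le (n_2-n_1)\cdot 2\tau E(v_0)$. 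Since $(n_2-n_1)\tau \le \tau + (t_2-t_1)$ (the extra $\tau$ absorbing the at-most-one-step discrepancy between $n_i\tau$ and $t_i$), this gives \eqref{c5}. The constants line up because $F_{\tau,f}$ carries the factor $1/2$ in front of the $L^2$ term, matching the $2E(v_0)$ on the right.

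Finally, \eqref{c6} is a direct consequence of Lemma~\ref{le:b2} applied at each step. For each $n\ge 0$, $v_{n+1}^\tau$ minimizes $F_{\tau,v_n^\tau}$ in $W_\alpha$ and $\overline{v_n^\tau}=\alpha$, so \eqref{b8} gives
$$
\bigl\| -\Delta\mu_{n+1}^\tau + (j+\sigma)''(v_{n+1}^\tau)\,\mu_{n+1}^\tau - M_{n+1}^\tau \bigr\|_2^2 \;\le\; \frac{\|v_{n+1}^\tau - v_n^\tau\|_2^2}{\tau^2}.
$$
Integrating the piecewise constant integrand over $[\tau,\infty)$ turns the left-hand side into $\tau\sum_{n\ge 1}\|{\cdots}\|_2^2 = \tau\sum_{n\ge 0}\|{\cdots}_{n+1}\|_2^2$, which is bounded by $\tau^{-1}\sum_{n\ge 0}\|v_{n+1}^\tau-v_n^\tau\|_2^2 \le \tau^{-1}\cdot 2\tau E(v_0) = 2E(v_0)$ using the telescoped dissipation bound over all $n$. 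I expect the only genuinely delicate point to be the bookkeeping in \eqref{c5}—correctly relating the integer indices $n_i$ to the times $t_i$ so that the harmless surplus step is accounted for by the $\tau$ in $(\tau + t_2 - t_1)$, and making sure the Cauchy–Schwarz step is applied with the right number of terms; everything else is a clean consequence of the minimization inequality and Lemma~\ref{le:b2}.
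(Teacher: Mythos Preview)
Your proposal is correct and follows essentially the same approach as the paper: the key discrete energy--dissipation inequality from the minimization property, monotonicity for \eqref{c4}, telescoping plus Cauchy--Schwarz with the index bound $(n_2-n_1)\tau\le \tau+(t_2-t_1)$ for \eqref{c5}, and Lemma~\ref{le:b2} combined with the summed dissipation for \eqref{c6}. The only cosmetic difference is that the paper sums the dissipation over all $n\ge 0$ once and reuses that global bound, whereas you sum from $n_1$ to $n_2-1$; both work.
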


\begin{proof}
Consider $n\ge 0$. Since $v_n^\tau\in W_\alpha$, we infer from \eqref{c2} that $F_{\tau,v_n^\tau}(v_{n+1}^\tau)\le F_{\tau,v_n^\tau}(v_n^\tau)$, that is,
\begin{equation}
\frac{1}{2\tau}\ \left\| v_{n+1}^\tau - v_n^\tau \right\|_2^2 + E\left( v_{n+1}^\tau \right) \le E\left( v_n^\tau \right)\,. \label{c7}
\end{equation}
Let $t_2> t_1\ge 0$ and put $n_i:=[t_i/\tau]$ (the integer part of $t_i/\tau$), $i=1,2$. On the one hand, $n_2\ge n_1$ and it readily follows from \eqref{c7} by induction that
$$
E\left( v^\tau(t_2) \right) = E\left( v_{n_2}^\tau \right) \ \le\ E\left( v_{n_1}^\tau \right) = E\left( v^\tau(t_1) \right)\,,
$$
whence \eqref{c4}. In particular, we have
\begin{equation}
\frac{1}{2} \,  \sup_{t\ge 0} \| \mu^\tau(t) \|_2^2 = \sup_{t\ge 0} E\left( v^\tau(t) \right) = \sup_{n\ge 0} E\left( v_n^\tau \right) \le E\left( v_0^\tau \right) = E(v_0)\,.\label{c8}
\end{equation}
On the other hand, summing \eqref{c7} over $n\in\NN$ gives
\begin{equation}
\frac{1}{2\tau}\ \sum_{n=0}^\infty \left\| v_{n+1}^\tau - v_n^\tau \right\|_2^2 \le E\left( v_0^\tau \right) = E(v_0)\,,\label{c9}
\end{equation}
from which we deduce that
\begin{eqnarray*}
\left\|v^\tau(t_2) - v^\tau(t_1) \right\|_2 & = & \left\|v_{n_2}^\tau - v_{n_1}^\tau \right\|_2 \le \sum_{n=n_1}^{n_2-1} \left\|v_{n+1}^\tau - v_n^\tau \right\|_2 \\
& \le & \left( n_2 - n_1 \right)^{1/2}\ \left( \sum_{n=n_1}^{n_2-1} \left\|v_{n+1}^\tau - v_n^\tau \right\|_2^2 \right)^{1/2} \\
& \le & \left( 1 + \frac{t_2-t_1}{\tau} \right)^{1/2}\ \left( 2\tau E(v_0) \right)^{1/2} \\
& \le & \sqrt{2E(v_0)}\ \left( \tau + (t_2-t_1) \right)^{1/2}\,,
\end{eqnarray*}
and thus \eqref{c5}. Finally, for $n\ge 0$, we have  $\overline{v_{n+1}^\tau} = \overline{v_n^\tau}=\alpha$ by \eqref{c2} and we infer from \eqref{b8} that
$$
\left\| -\Delta \mu_{n+1}^\tau + (j+\sigma)''(v_{n+1}^\tau)\ \mu_{n+1}^\tau - M_{n+1}^\tau \right\|_2 \le \frac{\|v_{n+1}^\tau - v_n^\tau\|_2}{\tau}\,.
$$
Combining \eqref{c9} and the previous inequality give
\begin{eqnarray*}
& & \int_\tau^\infty \left\| -\Delta \mu^\tau(t) + (j+\sigma)''(v^\tau(t))\ \mu^\tau(t) - M^\tau(t) \right\|_2^2\ dt \\ 
& \le & \sum_{n=0}^\infty \int_{(n+1)\tau}^{(n+2)\tau} \left\| -\Delta \mu_{n+1}^\tau + (j+\sigma)''(v_{n+1}^\tau)\ \mu_{n+1}^\tau - M_{n+1}^\tau \right\|_2^2\ dt \\
& \le & \sum_{n=0}^\infty \frac{\|v_{n+1}^\tau - v_n^\tau\|_2^2}{\tau} \le 2 E(v_0) \,,
\end{eqnarray*}
and the proof is complete. 
\end{proof}

Useful bounds on $(v^\tau)_\tau$ and $(\mu^\tau)_\tau$ follow from Lemma~\ref{le:c1}.

\begin{corollary}\label{co:c2}
For all $T>0$, there is $C_3(T)>0$ depending only on $\alpha$, $v_0$, $j$, $\sigma$, and $T$ such that , for $\tau\in (0,1)\cap (0,T)$, 
\begin{eqnarray}
\sup_{t\in [0,T]} \left\| v^\tau(t) \right\|_{H^2} & \le & C_3(T)\,, \label{c10} \\
\int_\tau^T \left( \left\| \mu^\tau(t) \right\|_{H^1}^4 + \left\| \mu^\tau(t) \right\|_{H^2}^2 \right)\ dt & \le & C_3(T)\,. \label{c11}
\end{eqnarray}
\end{corollary}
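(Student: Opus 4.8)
The plan is to read off both estimates from the bounds already collected in Lemma~\ref{le:c1}, combined with the quantitative coercivity of the energy (Lemma~\ref{le:b0}) and the regularity $\mu_n^\tau\in W$ established in Lemma~\ref{le:b2}. Throughout, $C_\Omega$ denotes a constant depending only on $\Omega$ and $N$.

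For \eqref{c10}, I would note that, for every $t\ge 0$, the function $v^\tau(t)$ equals some $v_n^\tau\in W_\alpha$ by construction, so Lemma~\ref{le:b0} gives $\|v^\tau(t)\|_{H^2}\le C_1(\alpha)\bigl(1+\sqrt{E(v^\tau(t))}\bigr)$, and since $E(v^\tau(t))\le E(v_0)$ by \eqref{c4}, the right-hand side is bounded by a constant depending only on $\alpha$, $v_0$, $j$, $\sigma$ (in fact, not even on $T$). A first consequence, used below, is that $(v^\tau(t))$ is bounded in $\mathcal{C}(\bar{\Omega})$ uniformly in $t\ge 0$ and $\tau\in(0,1)$ by the continuous embedding of $H^2(\Omega)$ in $\mathcal{C}(\bar{\Omega})$ (valid since $N\le 3$); as $(j+\sigma)''$ is continuous, this yields a constant $C_4>0$, depending only on $\alpha$, $v_0$, $j$, $\sigma$, such that $\|(j+\sigma)''(v^\tau(t))\|_\infty\le C_4$ for all $t\ge 0$ and $\tau\in(0,1)$.

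For \eqref{c11}, recall from \eqref{c8} that $\|\mu^\tau(t)\|_2^2\le 2E(v_0)$ for all $t\ge 0$. For $t\ge\tau$ one has $\mu^\tau(t)=\mu_n^\tau$ with $n\ge 1$, hence $\mu^\tau(t)\in W$ by Lemma~\ref{le:b2}, and, writing $g^\tau(t):=-\Delta\mu^\tau(t)+(j+\sigma)''(v^\tau(t))\,\mu^\tau(t)-M^\tau(t)$, we get $-\Delta\mu^\tau(t)=g^\tau(t)-(j+\sigma)''(v^\tau(t))\,\mu^\tau(t)+M^\tau(t)$. The last two terms are bounded in $L^2(\Omega)$ by $C_4\sqrt{2E(v_0)}$ each, using the previous paragraph, $\|\mu^\tau(t)\|_2\le\sqrt{2E(v_0)}$, and, for $M^\tau(t)$, the extra inequality $|M^\tau(t)|\le|\Omega|^{-1}\|(j+\sigma)''(v^\tau(t))\|_2\,\|\mu^\tau(t)\|_2$; therefore $\|\Delta\mu^\tau(t)\|_2\le\|g^\tau(t)\|_2+2C_4\sqrt{2E(v_0)}$. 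Elliptic regularity for the Laplacian with homogeneous Neumann boundary conditions (the very fact underlying the conclusion ``$\mu\in W$'' in Lemma~\ref{le:b2}) then gives $\|\mu^\tau(t)\|_{H^2}\le C_\Omega\bigl(\|\Delta\mu^\tau(t)\|_2+\|\mu^\tau(t)\|_2\bigr)$; squaring, integrating over $(\tau,T)$, and using \eqref{c6} to bound $\int_\tau^T\|g^\tau(t)\|_2^2\,dt\le 2E(v_0)$ controls $\int_\tau^T\|\mu^\tau(t)\|_{H^2}^2\,dt$. Finally, the interpolation inequality $\|w\|_{H^1}\le C_\Omega\|w\|_2^{1/2}\|w\|_{H^2}^{1/2}$ for $w\in H^2(\Omega)$ yields $\|\mu^\tau(t)\|_{H^1}^4\le C_\Omega^2\,\|\mu^\tau(t)\|_2^2\,\|\mu^\tau(t)\|_{H^2}^2\le 2C_\Omega^2 E(v_0)\,\|\mu^\tau(t)\|_{H^2}^2$, and integrating over $(\tau,T)$ and inserting the $H^2$ bound just obtained completes the proof of \eqref{c11}.

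I do not expect a genuine obstacle: this is essentially bookkeeping on the estimates of Lemma~\ref{le:c1}. The two points that need a little care are that $(j+\sigma)''(v^\tau)$ must be controlled in $L^\infty$ uniformly in $t$ and $\tau$ — which is why the $H^2$ bound \eqref{c10} and the Sobolev embedding are set up first — and that the step from $\|\Delta\mu^\tau\|_2$ to $\|\mu^\tau\|_{H^2}$ uses elliptic regularity for the Neumann problem. Note also that restricting the time integrals to $(\tau,T)$ rather than $(0,T)$ is essential, since $\mu_0^\tau=-\Delta v_0+(j+\sigma)'(v_0)$ is only known to lie in $L^2(\Omega)$.
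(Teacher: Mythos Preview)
Your argument is correct and, for \eqref{c10} and the $L^2(\tau,T;H^2)$ part of \eqref{c11}, it is essentially the paper's proof: bound $v^\tau$ in $H^2$ via Lemma~\ref{le:b0} and \eqref{c4}, infer an $L^\infty$ bound on $(j+\sigma)''(v^\tau)$, isolate $\Delta\mu^\tau$ from the quantity controlled by \eqref{c6}, and upgrade to $H^2$.

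The one genuine difference is how you obtain the $L^4(\tau,T;H^1)$ bound on $\mu^\tau$. You use the interpolation inequality $\|w\|_{H^1}^2\le C_\Omega\|w\|_2\|w\|_{H^2}$ together with the uniform $L^2$ bound \eqref{c8} and the $L^2(\tau,T;H^2)$ bound just established. The paper instead tests the equation $-\Delta\mu^\tau + j''(v^\tau)\mu^\tau = f^\tau - \sigma''(v^\tau)\mu^\tau + M^\tau$ by $\mu^\tau$, uses the convexity of $j$ (so that $j''\ge 0$) to drop a term, and obtains $\|\nabla\mu^\tau\|_2^2 \le C(T)(1+\|f^\tau\|_2)$, which lies in $L^2(\tau,T)$ by \eqref{c6}. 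Your route is shorter and does not use the sign of $j''$ at all; the paper's route, on the other hand, yields the $H^1$ estimate directly from \eqref{c6} without first passing through $H^2$ regularity, and makes explicit use of the structural assumption \eqref{a6}. Both are entirely legitimate here.
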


\begin{proof} The boundedness \eqref{c10} of $(v^\tau)_\tau$ is a straightforward consequence of \eqref{b1} and \eqref{c8}. Next, owing to the continuous embedding of $H^2(\Omega)$ in $L^\infty(\Omega)$ and \eqref{c10}, the  family
$((j+\sigma)''(v^\tau))_\tau$ is bounded in 
$L^\infty((0,T)\times\Omega)$ which, together with \eqref{c8}, imply that 
\begin{equation}
((j+\sigma)''(v^\tau) \mu^\tau)_\tau \;\;\mbox{ is bounded in }\;\; L^\infty(0,T;L^2(\Omega))\,.\label{c12}
\end{equation} 
Setting $f^\tau := -\Delta \mu^\tau + (j+\sigma)''(v^\tau)\ \mu^\tau - M^\tau$, it follows from \eqref{c6} and \eqref{c12} that 
\begin{eqnarray*}
\left( \int_\tau^T \|\Delta \mu^\tau(t)\|_2^2\ dt \right)^{1/2} & = & \left( \int_\tau^T \left\| (j+\sigma)''(v^\tau(t))\ \mu^\tau(t) - M^\tau(t) - f^\tau(t) \right\|_2^2\ dt \right)^{1/2} \\
& \le &  2\ \left( \int_\tau^T \left\| (j+\sigma)''(v^\tau(t))\ \mu^\tau(t) \right\|_2^2\ dt \right)^{1/2} + \left( \int_\tau^T \left\| f^\tau(t) \right\|_2^2\ dt \right)^{1/2} \\
& \le & C(T)\,,
\end{eqnarray*}
which gives the boundedness of $(\mu^\tau)_\tau$ in $L^2(\tau,T;H^2(\Omega))$ with the help of \eqref{c8}. Finally, $\mu^\tau\in W$ and solves 
$$
-\Delta\mu^\tau + j''(v^\tau)\ \mu^\tau = f^\tau - \sigma''(v^\tau)\ \mu^\tau +M^\tau \;\;\mbox{ in }\;\; \Omega\,.
$$
Taking the scalar product in $L^2(\Omega)$ of the previous equation with $\mu^\tau$ and using the nonnegativity of $j''$ due to the convexity \eqref{a6} of $j$  and the boundedness \eqref{a7} of $\sigma''$, we obtain
\begin{eqnarray*}
\|\nabla\mu^\tau\|_2^2 & \le & \|\nabla\mu^\tau\|_2^2 + \int_\Omega j''(v^\tau)\ (\mu^\tau)^2\ dx \\
& \le & \|f^\tau\|_2\ \|\mu^\tau\|_2 + \|\sigma''\|_\infty\ \|\mu^\tau\|_2^2 + |M^\tau|\ \|\mu^\tau\|_2 \,.
\end{eqnarray*}
We next deduce from \eqref{c8} and \eqref{c12} that 
$$
\|\nabla\mu^\tau\|_2^2 \le C(T)\ \left( 1 + \|f^\tau\|_2 \right)\,,
$$
and the boundedness of the right-hand side of the above inequality in $L^2(\tau,T)$ follows at once from \eqref{c6}. 
\end{proof}

\subsection{Convergence}\label{sec:cv}

Owing to \eqref{c5}, \eqref{c10}, and the compactness of the embedding of $H^2(\Omega)$ in $\mathcal{C}(\bar{\Omega})$, a refined version of the Ascoli-Arzel\`a theorem (in the spirit of \cite[Prop.~3.3.1]{AGS08}) ensures that $(v^\tau)_\tau$ is relatively compact in $\mathcal{C}([0,T]\times\bar{\Omega})$ for all $T>0$. Consequently, there are three functions $v$, $\mu$, and $M$ and a subsequence $\left( v^{\tau_k} \right)_{k\ge 1}$ of $(v^\tau)_\tau$ such that, for all $T>0$, 
$$
v\in \mathcal{C}([0,T]\times\bar{\Omega})\cap L^\infty(0,T;H^2(\Omega))\,, \quad \mu\in L^\infty(0,T;L^2(\Omega))\,, \quad M\in L^\infty(0,T)\,,
$$
and
\begin{eqnarray}
v^{\tau_k} & \longrightarrow & v \;\;\mbox{ in }\;\; \mathcal{C}([0,T]\times\bar{\Omega})\,, \label{c14} \\
v^{\tau_k} & \stackrel{*}{\rightharpoonup} & v \;\;\mbox{ in }\;\; L^\infty(0,T;H^2(\Omega))\,, \label{c15} \\
\mu^{\tau_k} & \stackrel{*}{\rightharpoonup} & \mu \;\;\mbox{ in }\;\; L^\infty(0,T;L^2(\Omega))\,, \label{c16} \\
M^{\tau_k} & \stackrel{*}{\rightharpoonup} & M \;\;\mbox{ in }\;\; L^\infty(0,T)\,. \label{c17} 
\end{eqnarray}
Thanks to the smoothness of $j$ and $\sigma$ and the convergences \eqref{c14}--\eqref{c17}, it is straightforward to pass to the limit in \eqref{c3b} and conclude that 
\begin{equation}
\mu = - \Delta v + (j+\sigma)'(v) \;\;\mbox{ and }\;\; M = \overline{(j+\sigma)''(v)\ \mu}\,. \label{c18}
\end{equation}
In addition, \eqref{c11}, \eqref{c16}, and a lower semicontinuity argument guarantee that 
\begin{equation}
\mu\in L^4(0,T;H^1(\Omega)) \cap L^2(0,T;H^2(\Omega)) \quad \mbox{for all} \quad T>0\,. \label{c19}
\end{equation}

It remains to derive the equation solved by $v$. Let $\psi\in W$, $t>0$, $n=[t/\tau]$, and $m\in\{0,\ldots,n-1\}$. Using the definition of $v_{m+1}^\tau$ and Lemma~\ref{le:b2}, we are led to 
$$
\int_\Omega \left[ \frac{v_{m+1}^\tau-v_m^\tau}{\tau} - \Delta\mu_{m+1}^\tau + (j+\sigma)''(v_{m+1}^\tau)\ \mu_{m+1}^\tau - M_{m+1}^\tau \right] \psi\ dx = 0\,,
$$
which also reads
$$
\int_\Omega \left( v_{m+1}^\tau-v_m^\tau \right)\ \psi\ dx = \int_{(m+1)\tau}^{(m+2)\tau} \int_\Omega \left[ \Delta\mu^\tau(s) - (j+\sigma)''(v^\tau(s))\ \mu^\tau(s) + M^\tau(s) \right] \psi\ dxds\,.
$$
Summing the above identities over $m\in \{0,\ldots,n-1\}$, we obtain
$$
\int_\Omega \left( v_n^\tau-v_0^\tau \right)\ \psi\ dx = \int_\tau^{(n+1)\tau} \int_\Omega \left[ \Delta\mu^\tau(s) - (j+\sigma)''(v^\tau(s))\ \mu^\tau(s) + M^\tau(s) \right] \psi\ dxds\,.
$$
$$
\int_\Omega \left( v^\tau(t)-v_0 \right)\ \psi\ dx = \int_\tau^{(n+1)\tau} \int_\Omega \left[ \Delta\mu^\tau(s) - (j+\sigma)''(v^\tau(s))\ \mu^\tau(s) + M^\tau(s) \right] \psi\ dxds\,.
$$
Noticing that $t\le (n+1)\tau \le t+\tau$, we may take $\tau=\tau_k$ in the above identity and pass to the limit as $k\to\infty$ with the help of \eqref{c14}--\eqref{c17} to obtain
\begin{equation}
\int_\Omega \left( v(t)-v_0 \right)\ \psi\ dx = \int_0^t \int_\Omega \left[ \Delta\mu(s) - (j+\sigma)''(v(s))\ \mu(s) + M(s) \right] \psi\ dxds\,.\label{c20}
\end{equation}
Collecting \eqref{c18}-\eqref{c20} completes the proof of the existence part of Theorem~\ref{th:a1}. The properties \eqref{a9} and \eqref{a10}  next follow from \eqref{c4}, \eqref{c6}, and the convergences \eqref{c14}-\eqref{c17}. 

\section{Uniqueness}\label{sec:un}

Let $v_1$ and $v_2$ be two solutions to \eqref{a1}-\eqref{a4} with $\mu_i:=-\Delta v_i + (j+\sigma)'(v_i)$ and $M_i := \overline{(j+\sigma)''(v_i) \mu_i}$, $i=1,2$. Fix $T>0$. Since $H^2(\Omega)$ is continuously embedded in $L^\infty(\Omega)$, the regularity properties of $v_1$, $v_2$, $\mu_1$, and $\mu_2$ listed in Theorem~\ref{th:a1} ensures that there is $K>0$ depending on $T$ such that
\begin{equation}
\sup_{t\in [0,T]} \left( \|v_1(t)\|_\infty + \|v_2(t)\|_\infty + \|\mu_1(t)\|_2 + \|\mu_2(t)\|_2 \right) + \int_0^T \left( \|\mu_1(s)\|_\infty^2 + \|\mu_2(s)\|_\infty^2 \right)\ ds \le K\,. \label{d1}
\end{equation}
It then follows from \eqref{d1} and the smoothness of $j$ and $\sigma$ that 
\begin{eqnarray}
& & \hskip-1cm \left| (j+\sigma)''(v_1)\ \mu_1 - (j+\sigma)''(v_2)\ \mu_2 \right|  \label{d2}\\ 
& \le & \left| (j+\sigma)''(v_1) - (j+\sigma)''(v_2) \right|\ \left| \mu_1 \right| +  \left| (j+\sigma)''(v_2) \right|\ \left| \mu_1 - \mu_2 \right| \nonumber \\
& \le & \left\| (j+\sigma)''' \right\|_{L^\infty(-K,K)}\ \left| v_1 - v_2 \right|\ \left| \mu_1 \right| +  \left\| (j+\sigma)'' \right\|_{L^\infty(-K,K)}\ \left| \mu_1 - \mu_2 \right|\,, \nonumber \\
& \le & C\ \left( \left| \mu_1 \right|\ \left| v_1 - v_2 \right| + \left| \mu_1 - \mu_2 \right| \right)\,,\nonumber 
\end{eqnarray}
from which we deduce that 
\begin{eqnarray}
|M_1-M_2| & \le & \frac{1}{|\Omega|}\ \int_\Omega \left| (j+\sigma)''(v_1)\ \mu_1 - (j+\sigma)''(v_2)\ \mu_2 \right|\ dx \label{d3} \\
& \le & C\ \int_\Omega \left( \left| \mu_1 \right|\ \left| v_1 - v_2 \right| + \left| \mu_1 - \mu_2 \right| \right)\ dx \nonumber \\
& \le & C\ \left( \|\mu_1\|_2\ \|v_1-v_2\|_2 + \|\mu_1-\mu_2\|_2 \right)\,. \nonumber
\end{eqnarray}
Since $v_1-v_2$ solves 
$$
\partial_t (v_1-v_2) - \Delta (\mu_1-\mu_2) = M_1-M_2 - (j+\sigma)''(v_1)\ \mu_1 + (j+\sigma)''(v_2)\ \mu_2
$$
and $v_1-v_2$ and $\mu_1-\mu_2$ both belong to $W$, we have
\begin{eqnarray*}
\frac{1}{2}\ \frac{d}{dt} \|v_1-v_2\|_2^2 & = & \int_\Omega (\mu_1-\mu_2)\ \Delta (v_1-v_2)\ dx + \int_\Omega (M_1-M_2)\ (v_1-v_2)\ dx \\
& - & \int_\Omega \left[ (j+\sigma)''(v_1)\ \mu_1 - (j+\sigma)''(v_2)\ \mu_2 \right]\ (v_1-v_2)\ dx \,.
\end{eqnarray*}
We deduce from \eqref{a2}, \eqref{d1}, \eqref{d2}, and \eqref{d3} that 
\begin{eqnarray*}
\frac{1}{2}\ \frac{d}{dt} \|v_1-v_2\|_2^2 & = & \int_\Omega (\mu_1-\mu_2)\ \left[ (j+\sigma)'(v_1)-(j+\sigma)'(v_2) - (\mu_1-\mu_2) \right]\ dx  \\
& + & \int_\Omega (M_1-M_2)\ (v_1-v_2)\ dx \\
& - & \int_\Omega \left[ (j+\sigma)''(v_1)\ \mu_1 - (j+\sigma)''(v_2)\ \mu_2 \right]\ (v_1-v_2)\ dx \\
& \le & \|(j+\sigma)''\|_{L^\infty(-K,K)}\ \|\mu_1-\mu_2\|_2\ \|v_1-v_2\|_2 - \|\mu_1-\mu_2\|_2^2 \\
&  + & C\ \left( \|\mu_1\|_2\ \|v_1-v_2\|_2 + \|\mu_1-\mu_2\|_2 \right)\ \|v_1-v_2\|_2 \\
& + & C\ \int_\Omega \left( \left| \mu_1 \right|\ \left| v_1 - v_2 \right| + \left| \mu_1 - \mu_2 \right| \right)\ |v_1-v_2|\ dx \\
& \le & C\ \|\mu_1-\mu_2\|_2\ \|v_1-v_2\|_2 - \|\mu_1-\mu_2\|_2^2 \\
& + & C\ \left( 1+ \|\mu_1\|_\infty \right)\ \|v_1-v_2\|_2^2 \\
& \le & C\ \left( 1+ \|\mu_1\|_\infty \right)\ \|v_1-v_2\|_2^2\,.
\end{eqnarray*}
Therefore, recalling \eqref{d1}, 
$$
\|(v_1-v_2)(t)\|_2^2 \le \|(v_1-v_2)(0)\|_2^2\ \exp{\left( C\ \int_0^t \left( 1+\|\mu_1(s)\|_\infty \right)\ ds \right)} \le C\ \|(v_1-v_2)(0)\|_2^2
$$
for $t\in [0,T]$, and the uniqueness assertion follows.

\section*{Acknowledgments}

This work was initiated during a visit of the first author at the 
Institut de Math\'ematiques de Toulouse, Universit\'e Paul Sabatier,
whose financial support and kind hospitality are gratefully acknowledged. 


\end{document}